\numberwithin{equation}{section}
\theoremstyle{thmstyleone}
\newtheorem{thm}{Theorem}[section]
\newtheorem{prop}[thm]{Proposition}
\theoremstyle{thmstyletwo}
\newtheorem{rem}[thm]{\bf Remark}
\theoremstyle{thmstylethree}
\mathchardef\lz="2D
\newcommand{\R}{\mathbb{R}^N}
\newcommand{\RR}{\mathbb{R}}
\crefname{equation}{Problem}{Problem}
\crefname{thm}{Theorem}{Theorem}
\crefname{lem}{Lemma}{Lemma}
\crefname{prop}{Proposition}{Propositions}
\begin{document}

\title{The Li--Lin's open problem on $\mathbb{R}^N$}

\author[1]{\fnm{Zhi-Yun} \sur{Tang}}

\author[1]{\fnm{Xianhua} \sur{Tang}}

\abstract{
	In 2012, Y.Y. Li and C.-S. Lin (Arch. Ration. Mech. Anal., 203(3): 943–968) posed an open problem concerning the existence of positive solutions to the elliptic equation
	$$
	\begin{cases}
		-\Delta u = -\lambda |x|^{-s_1}|u|^{p-2}u + |x|^{-s_2}|u|^{q-2}u & \text{in } \Omega, \\
		u = 0 & \text{on } \partial \Omega,
	\end{cases}
	$$
	for $\lambda > 0$, $p > q = 2^*(s_2)$, $0 \leq s_1 < s_2 < 2$, and $2^*(s) = \frac{2(N-s)}{N-2}$ denotes the Hardy-Sobolev critical exponent, initially studied in bounded domains $\Omega \subset \mathbb{R}^N$, $N \geq 3$. Currently, research on this open problem remains limited, and a complete resolution is still far from being achieved.

	Motivated by the need to address this open problem in more general settings, we extend our investigation to the entire space $\mathbb{R}^N$, focusing on the equation
	$$
	-\Delta u + u = -\lambda |x|^{-s_1}|u|^{p-2}u + |x|^{-s_2}|u|^{q-2}u \quad \text{in } \mathbb{R}^N.
	$$

Our analysis reveals stark contrasts between bounded and unbounded domains: in $\mathbb{R}^N$, the equation admits no solution when $q = 2^*(s_2)$ for any $\lambda > 0$, whereas a positive solution exists when $q < 2^*(s_2)$.

To establish these results, we employ the Nehari manifold method; however, the functional's unboundedness from below on the manifold causes standard global minimization techniques to be inapplicable. Instead, we characterize a local minimizer of the energy functional on the Nehari manifold, overcoming the challenge posed by the lack of a global minimizer. }

\keywords{Positive solutions, Nehari manifold, Hardy-Sobolev critical exponents, Local minimum point}
\pacs[MSC Classification]{35D99; 35J15; 35J91}
\maketitle
\section{Introduction}
Consider the following nonlinear equation involving Hardy-Sobolev exponents:
\begin{equation}\label{001}
	\begin{cases}
		-\Delta u=-\lambda |x|^{-s_{1}}|u|^{p-2}u+|x|^{-s_{2}}|u|^{q-2}u &\text { in } \Omega, \\
		u(x)=0 &\text { on } \partial \Omega,
	\end{cases}
\end{equation}
where $\Delta$ denotes the standard Laplace operator, $\Omega$ is a smooth bounded domain in $\mathbb{R}^N$ ($N \geq 3$), $\lambda \in \mathbb{R}$, $0 \leq s_1 < s_2 < 2$, $2^*(s) = \frac{2(N-s)}{N-2}$ for $0 \leq s \leq 2$, $2 < p \leq 2^*(s_1)$, and $2 < q \leq 2^*(s_2)$.

H. Brézis and L. Nirenberg \cite{Brezis1983} studied this equation and obtained several existence results for positive solutions. In the case involving the Sobolev critical exponent, they set $s_1 = s_2 = 0$, $p = 2$, $q = 2^*(s_2) = 2^*(0) = \frac{2N}{N-2}$, and $\lambda \in (-\lambda_1, 0)$, where $\lambda_1$ denotes the first eigenvalue of the $-\Delta$ operator with zero Dirichlet boundary conditions.

Subsequently, E. Jannelli \cite{Jannelli1999} studied the case involving Hardy terms in 1999, and N. Ghoussoub and C. Yuan \cite{Ghoussoub2000} stuidied the case involving Hardy-Sobolev critical exponents in 2000, both generalizing the results of H. Brézis and L. Nirenberg from 1983. For studies on more general cases, see \cite{Ekeland2002,Chen2004,Smets2005,Han2007,Li2012,Deng2012,Zhang2016a,Li2022}.

In 2004, N. Ghoussoub and X.S. Kang \cite{Ghoussoub2004} considered the case where $0$ is located on $\partial\Omega$ instead of placing $0$ inside $\Omega$ in their study. For subsequent studies, one may refer to \cite{Ghoussoub2006,Ghoussoub2006a,Ghoussoub2009,Hsia2010,Ghoussoub2017,Wang2017}.

In the literature \cite{Li2012}, Y.Y. Li and C.-S. Lin investigated the case of equation \eqref{001} with dual Hardy-Sobolev critical exponents ($0 \in \partial\Omega$, $p = 2^*(s_1) < q = 2^*(s_2)$, $\lambda \in \mathbb{R}$). Additional related advancements can also be found in \cite{Jin2011,Yan2013,Cerami2015,Zhong2021,Wang2021,Wang2025,Tang2025} and the references therein.

Notably, most existing studies on equation \eqref{001} are primarily conducted under the conditions of $\lambda < 0$ or $p < q$, where it is relatively straightforward to prove the boundedness of $(PS)_c$ sequences and thereby establish the existence of positive solutions. However, when $\lambda > 0$ and $p > q$, $(PS)_c$ sequences may not be bounded, making it difficult to obtain satisfactory results.

Y.Y. Li and C.-S. Lin \cite{Li2012} identified this challenge in their research and posed the following open problem: When \( s_1 < s_2 \) and \( \lambda > 0 \), the existence of positive solutions to equation \eqref{001} remains completely unknown. Specifically, for the equation
$$
\Delta u - u^p + \frac{u^{2^*(s)-1}}{|x|^s} = 0 \quad \text{in } \Omega,
$$
where \( 0 < s < 2 \) and \( 2^*(s) - 1 < p < \frac{N+2}{N-2} \), the existence of positive solutions remains an interesting open problem.

Firstly, in 2015, G. Cerami, X. Zhong and W. Zou \cite{Cerami2015} obtained some existence results of positive solutions by the perturbation approach and the monotonicity trick. Recently in \cite{Tang2025}, we gave a first nonexistence result by contradiction with the H\"{o}lder inequality, the Hardy inequality and the Young inequality to the two-critical Li--Lin's open problem, and a second existence result to the Li--Lin's open problem ($2^{*}(s_1)\geq p>q=2^{*}(s_2)$) by the method of sub-supersolutions based on the Theorem 1.5 in \cite{Cerami2015}.

Initial studies of the above problem have focused on bounded domains. Motivated by resolving this open problem in more general settings, we aim to analyze how the equation’s solutions behave when the domain is an unbounded region, particularly $\mathbb{R}^N$.

Now, we consider the Hardy-Sobolev's critical exponents problem
\begin{equation}
	\label{eq1}
	-\Delta u+u=-\lambda |x|^{-s_{1}}|u|^{p-2}u+|x|^{-s_{2}}|u|^{q-2}u \text{ in } \mathbb{R}^N,
\end{equation}
where $N\geq3$, $\lambda\in \mathbb{R}$, $0\leq s_1 <s_2 < 2$, $2^{*}(s)=\frac{2(N-s)}{N-2}$ for $0 \leq s \leq 2$, $2< p\leq 2^{*}(s_1)$, $2< q\leq 2^{*}(s_2)$.

In fact, when studying \cref{eq1}, we'll find that the situation in the unbounded case is quite different from that in the bounded region. Specifically, in $\mathbb{R}^N$, for any $\lambda > 0$, the equation has no solution when $q = 2^{*}(s_2)$, while it has a solution when $q < 2^{*}(s_2)$.

To prove these results, we use the Nehari manifold method. However, it's important to note that for this type of problem, the functional is unbounded from below on the manifold, so the common Nehari manifold method doesn't work here. Therefore, we're trying to find a local minimum on the Nehari manifold to obtain a solution, and the main challenge now is how to find this local minimum.

We will present the main results regarding the existence and nonexistence of solutions of this question in the unbounded region $\mathbb{R}^N$. The main results are the following theorems.
\begin{thm}\label{thm1}
	Suppose that $0\leq s_1 < s_2 < 2$ and $2^{*}(s_2)=q<p\leq 2^{*}(s_1)$. Then \cref{eq1} has no nonzero solution for all $\lambda>0$.
\end{thm}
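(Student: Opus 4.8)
The plan is to obtain a contradiction from a Pohozaev-type identity. The decisive structural feature is that the zeroth-order term $+u$, which does not appear in the bounded-domain problem \eqref{001}, is precisely what obstructs the existence of a solution in $\R$; accordingly the argument must produce an identity in which $\IR u^2$ appears with a sign incompatible with the other terms. Concretely, I would work with two integral identities valid for any solution $u\in\HR$ of \cref{eq1}. Pairing the equation with $u$ gives
\[
\IR|\nabla u|^2+\IR u^2=-\lambda\IR|x|^{-s_1}|u|^p+\IR|x|^{-s_2}|u|^q ,
\]
while pairing it with $x\cdot\nabla u$ and integrating by parts yields the Pohozaev identity
\[
\frac{N-2}{2}\IR|\nabla u|^2=-\frac{N}{2}\IR u^2-\frac{\lambda(N-s_1)}{p}\IR|x|^{-s_1}|u|^p+\frac{N-s_2}{q}\IR|x|^{-s_2}|u|^q ,
\]
where the shift $N\mapsto N-s_i$ in the nonlinear coefficients reflects the scaling of the weights, $x\cdot\nabla_x\bigl(|x|^{-s_i}\bigr)=-s_i|x|^{-s_i}$.

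Before using these, I would justify the Pohozaev identity. Elliptic bootstrap applied to $-\Delta u+u=-\lambda|x|^{-s_1}|u|^{p-2}u+|x|^{-s_2}|u|^{q-2}u$ shows that a solution is of class $C^2$ on $\R\setminus\{0\}$ and $C^{1,\alpha}_{\mathrm{loc}}$, and a comparison argument gives exponential decay of $u$ and $\nabla u$ at infinity; the Hardy--Sobolev inequality (using $0\le s_i<2$, $p\le 2^*(s_1)$, $q=2^*(s_2)$) ensures that $|x|^{-s_1}|u|^p$, $|x|^{-s_2}|u|^q$ and $|\nabla u|^2$ are integrable near the origin. One then runs the Pohozaev computation on an annulus $B_R\setminus B_\varepsilon$: the exponential decay annihilates the boundary integrals on $\partial B_R$ as $R\to\infty$, and the local integrability together with $s_i<2$ annihilates those on $\partial B_\varepsilon$ as $\varepsilon\to0$. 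I expect this step --- controlling everything near the singularity of the weights and checking that no boundary term survives --- to be the only genuinely delicate point; what follows is pure algebra.

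Finally, since $q=2^*(s_2)=\frac{2(N-s_2)}{N-2}$ we have $\frac{N-s_2}{q}=\frac{N-2}{2}$, so eliminating $\IR|x|^{-s_2}|u|^q$ between the two identities makes the $\IR|\nabla u|^2$ and $\IR|x|^{-s_2}|u|^q$ terms drop out and leaves
\[
\IR u^2=\lambda\Bigl(\frac{N-2}{2}-\frac{N-s_1}{p}\Bigr)\IR|x|^{-s_1}|u|^p .
\]
Because $2<p\le 2^*(s_1)=\frac{2(N-s_1)}{N-2}$, we have $\frac{N-s_1}{p}\ge\frac{N-2}{2}$, so the bracket is $\le0$; as $\lambda>0$ and $\IR|x|^{-s_1}|u|^p\ge0$, the right-hand side is $\le0$, whereas the left-hand side is $\ge0$. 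Hence $\IR u^2=0$, i.e. $u\equiv0$, contradicting that $u$ is a nonzero solution. This establishes \cref{thm1}. I note that the computation never uses the sign of $u$, so sign-changing solutions are excluded as well, and the hypothesis $q<p$ enters only through $p\le 2^*(s_1)$.
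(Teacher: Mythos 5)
Your proof is correct and takes essentially the same route as the paper: combine the Nehari identity with the Pohozaev identity, use $q=2^{*}(s_2)$ so that $\frac{N-s_2}{q}=\frac{N-2}{2}$ cancels the gradient and critical terms, and conclude from $p\leq 2^{*}(s_1)$ and $\lambda>0$ that $\IR u^2\leq 0$, hence $u\equiv 0$. The only difference is that you also sketch a justification of the Pohozaev identity (regularity near the singular weights, decay at infinity, vanishing boundary terms), a step the paper simply asserts.
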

\begin{rem}
	Contrary to the situation in bounded domains, the Li--Lin problem has no solution in $\mathbb{R}^N$.
\end{rem}
\begin{thm}\label{thm2}
	Suppose that $0\leq s_1 < s_2 < 2$, $2<q<p< 2^{*}(s_1)$ and $q< 2^{*}(s_2)$. Assume that \begin{eqnarray}\label{21}
		q>\frac{2 - s_2}{2 - s_1}p+\frac{2s_2 - 2s_1}{2 - s_1}.
	\end{eqnarray} Then \cref{eq1} has at least one positive solution for all $\lambda>0$.
\end{thm}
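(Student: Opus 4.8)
The plan is to obtain a positive solution of \cref{eq1} as a minimizer of the energy functional
$$\mathcal{I}(u)=\frac12\int_{\mathbb{R}^N}\left(|\nabla u|^2+u^2\right)+\frac{\lambda}{p}\int_{\mathbb{R}^N}|x|^{-s_1}|u|^{p}-\frac1q\int_{\mathbb{R}^N}|x|^{-s_2}|u|^{q}$$
restricted to a suitable part of the Nehari manifold $\mathcal{N}=\{u\in H^{1}(\mathbb{R}^N)\setminus\{0\}:\langle\mathcal{I}'(u),u\rangle=0\}$. The first observation I would record is that the competing nonlinearity is tame: since $0<s_2<2$ and $q<2^{*}(s_2)$, the embedding $H^{1}(\mathbb{R}^N)\hookrightarrow L^{q}(\mathbb{R}^N;|x|^{-s_2}\,dx)$ is compact — at the origin because $q$ is subcritical, and near infinity because the weight $|x|^{-s_2}$ vanishes, which prevents loss of mass at infinity — so $u\mapsto\int_{\mathbb{R}^N}|x|^{-s_2}|u|^{q}$ is weakly continuous on $H^{1}(\mathbb{R}^N)$. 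The term $\int_{\mathbb{R}^N}|x|^{-s_1}|u|^{p}$ need only be weakly lower semicontinuous (which it is), since it enters $\mathcal{I}$ with a favorable sign. So, unlike in \cref{thm1}, compactness is not the principal difficulty.

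Next I would run the fibering analysis. For $u\neq0$ the map $t\mapsto\mathcal{I}(tu)$ increases near $t=0$ (because $q>2$), tends to $+\infty$ (because $p>q>2$), and has at most one local maximum, at some $t^{-}(u)$, and one local minimum, at some $t^{+}(u)$; this splits $\mathcal{N}=\mathcal{N}^{+}\cup\mathcal{N}^{0}\cup\mathcal{N}^{-}$ according to which of these the point is. Using the Nehari identity one finds $\mathcal{I}(u)\ge\frac{(p-2)(q-2)}{2pq}\|u\|_{H^{1}}^{2}\ge c_0>0$ on $\mathcal{N}^{-}$, while on $\mathcal{N}^{+}$ the functional is unbounded below. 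I would rewrite hypothesis~\eqref{21} in the equivalent form $\frac{p-2}{2-s_1}<\frac{q-2}{2-s_2}$; its role is to fix the relative decay rates of $\int_{\mathbb{R}^N}|x|^{-s_1}|u|^{p}$ and $\int_{\mathbb{R}^N}|x|^{-s_2}|u|^{q}$ along the concentrating family $u_\varepsilon(x)=\varepsilon^{1-N/2}w(x/\varepsilon)$, $\varepsilon\to0$. Precisely this inequality guarantees, on the one hand, that $\mathcal{N}^{-}\neq\emptyset$ for every $\lambda>0$ — equivalently that $\sup_{\|v\|_{H^1}=1}\int_{\mathbb{R}^N}|x|^{-s_2}|v|^{q}\left(\int_{\mathbb{R}^N}|x|^{-s_1}|v|^{p}\right)^{-(q-2)/(p-2)}=+\infty$ — and, on the other hand, that $\mathcal{I}$ is indeed not bounded below on $\mathcal{N}^{+}$. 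Hence I set $m^{-}:=\inf_{\mathcal{N}^{-}}\mathcal{I}\in(0,+\infty)$ and try to attain it.

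For the attainment, take a minimizing sequence $(u_n)\subset\mathcal{N}^{-}$; it is bounded, from $\mathcal{I}(u_n)\ge\frac{(p-2)(q-2)}{2pq}\|u_n\|_{H^1}^2$. Let $u_n\rightharpoonup u$ in $H^1(\mathbb{R}^N)$. Because $\int_{\mathbb{R}^N}|x|^{-s_2}|u_n|^{q}=\|u_n\|_{H^1}^2+\lambda\int_{\mathbb{R}^N}|x|^{-s_1}|u_n|^{p}\ge\|u_n\|_{H^1}^2\ge c_0^2>0$ uniformly on $\mathcal{N}$, and this quantity is weakly continuous, we get $u\neq0$. Combining the weak lower semicontinuity of $\|\cdot\|_{H^1}^2$ and of $\int_{\mathbb{R}^N}|x|^{-s_1}|\cdot|^{p}$ with the weak continuity of $\int_{\mathbb{R}^N}|x|^{-s_2}|\cdot|^{q}$ shows $\langle\mathcal{I}'(u),u\rangle\le0$, so the fibering map of $u$ has the two-critical-point shape and $t^{-}(u)u\in\mathcal{N}^{-}$ is well defined; comparing the fibering maps of $u$ and of $u_n$ on $[0,1]$ gives $\mathcal{I}(t^{-}(u)u)\le m^{-}$, hence equality. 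Thus $m^{-}$ is achieved at some $w\in\mathcal{N}^{-}$ (and when $\langle\mathcal{I}'(u),u\rangle=0$ the convergence $u_n\to u$ is in fact strong).

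Finally, $\mathcal{N}^{-}$ is a $C^{1}$ constraint away from $\mathcal{N}^{0}$, so the usual Lagrange-multiplier computation on the Nehari manifold shows that a minimizer $w\in\mathcal{N}^{-}\setminus\mathcal{N}^{0}$ satisfies $\mathcal{I}'(w)=0$, i.e. is a nontrivial weak solution of \cref{eq1}; since $\mathcal{I}(|w|)\le\mathcal{I}(w)$ and $|w|$ still lies in $\mathcal{N}^{-}$, we may take $w\ge0$, and elliptic regularity (using $p<2^{*}(s_1)$ and $s_1,s_2<2$) together with the strong maximum principle gives $w>0$. I expect the real obstacle to be the step I glossed over: excluding that the minimizer, or the minimizing sequence in the limit, degenerates onto $\mathcal{N}^{0}$ — equivalently, that the second derivative of $t\mapsto\mathcal{I}(tu_n)$ at $t=1$ does not vanish in the limit. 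This is exactly where the absence of a global minimizer on $\mathcal{N}$ forces extra work: rather than applying Ekeland's principle on all of $\mathcal{N}$, one must localize, for example by showing $m^{-}<\inf_{\mathcal{N}^{0}}\mathcal{I}$ or by a quantitative perturbation inside $\mathcal{N}^{-}$, once more relying on \eqref{21}.
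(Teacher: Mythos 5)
Your overall strategy is the same as the paper's: decompose the Nehari manifold according to the sign of the fiber-map second derivative (your $\mathcal{N}^-$, where $\psi(u)=\langle I''(u)u,u\rangle<0$, is exactly the paper's $M^+$), use \eqref{21} to show this set is nonempty, minimize $I$ there, and recover a solution through the Lagrange-multiplier computation, the multiplier vanishing because $\psi<0$. Your direct projection/fiber-comparison treatment of a minimizing sequence is a legitimate variant of the paper's Ekeland-plus-$(PS)$-sequence argument, and your observation that $u\mapsto\int_{\mathbb{R}^N}|x|^{-s_2}|u|^q\,dx$ is weakly continuous on all of $H^1(\mathbb{R}^N)$ (since $s_2>0$ and $q<2^*(s_2)$) is correct and would even allow one to dispense with the radial setting and the principle of symmetric criticality that the paper invokes.

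However, the two steps you leave unproven are precisely the substantive content of the paper's proof, so the proposal has a genuine gap. First, the degenerate alternative: your comparison argument yields a minimizer only when the weak limit $u$ satisfies $\varphi(u)<0$, or $\varphi(u)=0$ with $\psi(u)<0$; when $\varphi(u)=\psi(u)=0$ (limit on $\mathcal{N}^0$, equivalently a minimizing sequence with $\psi(u_n)\to0$) the fiber map of $u$ has a degenerate inflection at $t=1$ and no projection $t^{-}(u)$ exists, so the scheme stalls exactly there. The paper removes this obstruction with two dedicated arguments: in the proof of Theorem \ref{thm2}, the case $\psi(u_n)\to0$ is handled by comparing $m^+=h(1)$ with $h(t_0)$ along the fiber of the weak limit, which forces $\lim_{n\to\infty}\|u_n\|=\|u_0\|$ and hence strong convergence; and Proposition \ref{m^+<m^0} shows $m^+<m^0$ whenever $m^0$ is attained, which is where \eqref{21} is used a second time (through the Pohozaev identity, to guarantee $\varphi'(u_0)\neq0$) together with an implicit-function-theorem curve inside $M$ along which $I$ strictly decreases into $M^+$. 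Your suggested fixes (``show $m^-<\inf_{\mathcal{N}^0}I$'' or ``a quantitative perturbation inside $\mathcal{N}^-$'') are these missing lemmas, not routine afterthoughts. Second, the nonemptiness of your $\mathcal{N}^-$ for every $\lambda>0$ is only asserted, via an unverified claim that a certain supremum is infinite; the paper proves it (Propositions \ref{prop:1} and \ref{prop:2}) by an explicit dilation $u(r^{-1}x)$ whose exponent bookkeeping is exactly where the strict inequality \eqref{21} enters. Until these two points are supplied, the argument does not close.
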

\begin{rem}
	Since the functional fails to be bounded below on the manifold and does not attain a global minimum, we turn to identifying solutions through the discovery of local minima, differing from the standard approach.
\end{rem}
\begin{rem}
	Unlike bounded domains, distinct phenomena emerge in $\mathbb{R}^N$ for the critical and subcritical cases. Particularly, in $\mathbb{R}^N$, there exists no solution when $q = 2^{*}(s_2)$, while a positive solution is ensured to exist when $q < 2^{*}(s_2)$.
\end{rem}

\section{Preliminaries}
Our work space is the Sobolev's space $H^1{(\R)}$ with scalar product and norm given by
\begin{equation*}
(u,v)=\int_{\R }\nabla u\cdot\nabla vdx + \int_{\R } uvdx\quad \mbox{and}\quad {\|u\|}=(u,u)^{\frac{1}{2}}.
\end{equation*}
It is well-known that the solutions of problem \eqref{eq1} are precisely the critical points of the energy  functional $I$: $H^1{(\R  )}\rightarrow \mathbb{R}$ defined by
\begin{eqnarray*}
I(u)&=& \frac{1}{2}\int_{\R }\left| \nabla u\right|^2 dx + \frac{1}{2}\int_{\R } \left| u\right| ^2dx+\frac{\lambda}{p}\int_{\R }|x|^{-s_{1}}|u|^p d x-\frac{1}{q}\int_{\R }|x|^{-s_{2}}|u|^q d x.
\end{eqnarray*}
It is easy to see that $I \in C^2(H^1{(\R)},\mathbb{R})$. Moreover, for any $u,v$ and $w\in H^1{(\R)}$, we have
\begin{eqnarray*}
\langle I'(u),\ v\rangle &=&(u,v)+\lambda\int_{\R }|x|^{-s_{1}}|u|^{p-2}uv dx- \int_{\R }|x|^{-s_{2}}|u|^{q-2} uv d x
\end{eqnarray*}
and
\begin{eqnarray*}
\langle I''(u)v,\ w\rangle =(v,w)+(p-1)\lambda\int_{\R }|x|^{-s_{1}}|u|^{p-2}vw dx- (q-1)\int_{\R }|x|^{-s_{2}}|u|^{q-2} vw d x.
\end{eqnarray*}
Define
\begin{eqnarray*}
	M&=&\left\{u\in H_r^1(\R )\setminus \{0\}\left| \varphi(u)\stackrel{\triangle}{=}\langle I'(u),u\rangle=0\right.\right\},\\
	M^+&=&\left\{u\in M\left| \psi(u)\stackrel{\triangle}{=}\langle I''(u)u,\ u\rangle<0\right.\right\},\\ m^+&=&\inf\{I(u)\mid u\in M^+\},
\end{eqnarray*}
and
\begin{eqnarray*}
	M^0&=&\left\{u\in M\left| \psi(u)=0 \right.  \right\},\\
	m^0&=&\inf\{I(u)\mid u\in M^0\},
\end{eqnarray*}
where $$H_r^1(\R )=\{ u \in  H^1(\R )\mid u\text{ is radially symmetric}\}.$$  To prove our theorems, we firstly give some propositions.

\begin{prop}\label{prop:1}
	Suppose that $0\leq s_1 < s_2 < 2$, $2<q<p< 2^{*}(s_1)$ and $q< 2^{*}(s_2)$. Assume that $\varphi(u_0)<0$ for some $u_0$. Then there exists $t_0\in (0, 1)$ such that $t_0 u_0\in M^+$, thus $M^+ \not= \phi$.
\end{prop}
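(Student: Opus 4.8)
The plan is to study the single-variable function $g(t) = \varphi(tu_0) = \langle I'(tu_0), tu_0\rangle$ for $t > 0$ and locate a zero of it that additionally lies in $M^+$. Explicitly, writing $A = \|u_0\|^2$, $B = \lambda\int_{\R}|x|^{-s_1}|u_0|^p\,dx$ and $C = \int_{\R}|x|^{-s_2}|u_0|^q\,dx$ (all strictly positive once $u_0 \neq 0$), we have $g(t) = t^2 A + t^p B - t^q C$. The hypothesis $\varphi(u_0) < 0$ says $g(1) < 0$, i.e. $A + B < C$. Since $2 < q < p$, as $t \to 0^+$ the quadratic term dominates and $g(t) = t^2(A + o(1)) > 0$ for small $t$; combined with $g(1) < 0$ and continuity, the intermediate value theorem produces some $t_0 \in (0,1)$ with $g(t_0) = 0$, that is, $t_0 u_0 \in M$.

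Next I would arrange that $t_0$ can be chosen with $\psi(t_0 u_0) < 0$, so that $t_0 u_0 \in M^+$. Observe that $\psi(tu_0) = \langle I''(tu_0)(tu_0), tu_0\rangle = 2t^2 A + p\,t^p B - q\,t^q C = t\,g'(t)$ up to the elementary identity $g'(t) = 2tA + p\,t^{p-1}B - q\,t^{q-1}C$, hence $\psi(tu_0) = t\,g'(t)$. Therefore it suffices to exhibit a zero $t_0 \in (0,1)$ of $g$ with $g'(t_0) < 0$. I would take $t_0$ to be the \emph{first} zero of $g$ on $(0,1)$, i.e. $t_0 = \inf\{t \in (0,1) : g(t) = 0\}$; this infimum is positive and is attained since $g > 0$ near $0$ and $g$ is continuous. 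At such a first zero, $g$ passes from positive to non-positive values, so $g'(t_0) \leq 0$. To upgrade this to a strict inequality, suppose $g'(t_0) = 0$; then from $g(t_0) = 0$ and $g'(t_0) = 0$ one gets a pair of linear relations among $t_0^2 A$, $t_0^p B$, $t_0^q C$ which, because $2 < q < p$ forces the three exponents to be distinct, can be solved to give $t_0^2 A = \frac{(p-q)}{\text{(something)}}\cdots$ — more cleanly, eliminating $C$ between $g(t_0)=0$ and $g'(t_0)=0$ yields $t_0^2 A(q-2) + t_0^p B(q-p) = 0$, whose left-hand side is strictly positive (first term positive since $q > 2$, second term positive since $q < p$), a contradiction. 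Hence $g'(t_0) < 0$ and $\psi(t_0 u_0) = t_0 g'(t_0) < 0$, giving $t_0 u_0 \in M^+$ and in particular $M^+ \neq \emptyset$.

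The main obstacle is not the existence of the zero but the verification of the strict sign $\psi(t_0 u_0) < 0$ rather than just $\leq 0$; this is exactly where the structural hypothesis $q < p$ (and $q > 2$) enters, via the elimination argument above, and it is worth checking that the degenerate case $g'(t_0) = 0$ is genuinely excluded rather than merely generically absent. A secondary point to be careful about is that $u_0$ should be taken in $H^1_r(\R)\setminus\{0\}$ so that $t_0 u_0$ actually lies in the ambient space over which $M$ is defined; if $u_0$ is only assumed in $H^1(\R)$, one would first note that $\varphi$ and $\psi$ depend only on $|u_0|$-type integrals and reduce to the radial setting, or simply assume $u_0 \in H^1_r(\R)$ as the natural reading of the statement. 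Note finally that we do not need the extra inequality \eqref{21} here — that hypothesis is reserved for the later analysis of the minimization problem, not for the mere non-emptiness of $M^+$.
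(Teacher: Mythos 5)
Your reduction to the scalar function $g(t)=\varphi(tu_0)=t^2A+t^pB-t^qC$ and the observation that at a zero of $g$ one has $\psi(t_0u_0)=t_0g'(t_0)$, so that it suffices to find a zero in $(0,1)$ with $g'(t_0)<0$, is sound and close in spirit to the paper's argument. However, your exclusion of the degenerate case $g'(t_0)=0$ contains a genuine error. Eliminating $C$ between $g(t_0)=0$ and $t_0g'(t_0)=0$ gives $(q-2)t_0^2A+(q-p)t_0^pB=0$, and since $q<p$ the second term is \emph{negative}, not positive as you claim; the identity just says $(q-2)t_0^2A=(p-q)t_0^pB$, with both sides positive, so no contradiction follows. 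The degenerate configuration is in fact genuinely possible for functions of this form: with $A=1$, $B=C=4$, $p=4$, $q=3$ one gets $g(t)=t^2+4t^4-4t^3$, which has a double (tangential) zero at $t_0=1/2$. Hence no purely local algebra at $t_0$ can rule out $g'(t_0)=0$; what excludes it here is the hypothesis $g(1)=\varphi(u_0)<0$, which your degenerate-case analysis never invokes.

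The gap is easy to close, and the closure is essentially what the paper's proof encodes. Set $\tilde g(t)=t^{-2}g(t)=A+Bt^{p-2}-Ct^{q-2}$ (this is the paper's $g$). Then $\tilde g'(t)=t^{q-3}\bigl[(p-2)Bt^{p-q}-(q-2)C\bigr]$ changes sign exactly once, from negative to positive, so $\tilde g$ has a unique interior minimum. If $g$ had a zero with $g'(t_0)=0$, then $\tilde g(t_0)=\tilde g'(t_0)=0$, so the minimum value of $\tilde g$ would be $0$ and $\tilde g\ge 0$ everywhere, contradicting $\tilde g(1)=\varphi(u_0)<0$. Equivalently: since $\tilde g(1)<0$, the first zero $t_0\in(0,1)$ lies strictly in the region where $\tilde g$ is decreasing, so $g'(t_0)=t_0^2\tilde g'(t_0)<0$ and $\psi(t_0u_0)<0$. (The paper argues instead with the two zeros $t_0<1<t_1$ and the monotonicity of $h(x)=(p-2)(x^{q-2}-1)-(q-2)(x^{p-2}-1)$ on $(1,\infty)$; both routes use $\varphi(u_0)<0$ beyond the mere existence of a zero.) Two minor points: your intermediate identity $\psi(tu_0)=tg'(t)$ is wrong as a general formula — correctly $\psi(tu_0)=t^2A+(p-1)t^pB-(q-1)t^qC=tg'(t)-g(t)$ — though this is harmless since you only use it where $g(t_0)=0$; and strict positivity of $B$ requires $\lambda>0$, which is implicit from the setting of Theorem \ref{thm2} rather than stated in the proposition.
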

\begin{proof}
It follows from $\varphi(u_0)<0$ and $\varphi(0)=0$ that $u_0\neq0$. Let
\begin{eqnarray*}
	g(t)&=&t^{-2}\varphi(tu_0)\\&=&\|u_0\|^2+\lambda t^{p-2}\int_{\R }|x|^{-s_{1}}|u_0|^p d x-t^{q-2}\int_{\R }|x|^{-s_{2}}|u_0|^q d x
\end{eqnarray*}
for $t>0$. Then we have
\begin{eqnarray*}
	g(1)&=&\varphi(u_0)<0,\\
	g(+0)&=&\|u_0\|^2>0,
\end{eqnarray*}
and $$g(+\infty)=+\infty.$$ Hence, there exist $t_0\in (0, 1)$ and $t_1\in (1, +\infty)$ such that $$g(t_0)=g(t_1)=0,$$ which implies that
\begin{eqnarray*}
	\lambda \int_{\R }|x|^{-s_{1}}|u_0|^p d x=\frac{t_1^{q-2}-t_0^{q-2}}{t_1^{p-2}-t_0^{p-2}}\int_{\R }|x|^{-s_{2}}|u_0|^q d x.
\end{eqnarray*}
Moreover, one has
$$
\varphi(t_0 u_0)=t_0^2g(t_0)=0.
$$

Let $$h(x)=(p-2)(x^{q-2}-1)-(q-2)(x^{p-2}-1)$$ for $x>0$. When $x>1$,  $$h'(x)=(p-2)(q-2)(x^{q-3}-x^{p-3})<0.$$ Since $h(x)$ is decreasing for $x > 1$, it follows that $$h(x) < h(1) = 0\quad \text{for}\ x>1.$$ Furthermore, we obtain
\begin{eqnarray*}
	\psi(t_0 u_0)&=&t_0^2\|u_0\|^2+(p-1)t_0^p \lambda \int_{\R }|x|^{-s_{1}}|u_0|^p d x\\&&-(q-1)t_0^q\int_{\R }|x|^{-s_{2}}|u_0|^q d x-\varphi(t_0 u_0)\\
	&=&(p-2)t_0^p \lambda \int_{\R }|x|^{-s_{1}}|u_0|^p d x-(q-2)t_0^q\int_{\R }|x|^{-s_{2}}|u_0|^q d x\\
	&=&(p-2)t_0^p\frac{t_1^{q-2}-t_0^{q-2}}{t_1^{p-2}-t_0^{p-2}}\int_{\R }|x|^{-s_{2}}|u_0|^q d x-(q-2)t_0^q\int_{\R }|x|^{-s_{2}}|u_0|^q d x\\
	&=&[(p-2)t_0^{p-q}(t_1^{q-2}-t_0^{q-2})-(q-2)(t_1^{p-2}-t_0^{p-2})]	\frac{t_0^q\int_{\R }|x|^{-s_{2}}|u_0|^q d x}{t_1^{p-2}-t_0^{p-2}}\\
	&=&h\left( \frac{t_1}{t_0}\right) \frac{t_0^{p+q-2}\int_{\R }|x|^{-s_{2}}|u_0|^q d x}{t_1^{p-2}-t_0^{p-2}}\\
	&<&0.
\end{eqnarray*}
Thus, $t_0 u_0\in M^+$, and $M^+ \not= \phi$.
\end{proof}

\begin{prop}\label{prop:2}
	Suppose that $0\leq s_1 < s_2 < 2$, $2<q<p< 2^{*}(s_1)$ and $q< 2^{*}(s_2)$. Assume that \eqref{21} holds. Then $M^0 \not= \phi$, and $M^+ \not= \phi$.
\end{prop}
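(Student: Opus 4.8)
The plan is to exhibit elements of $M^0$ and $M^+$ by testing the functional along a one‑parameter family of dilations of a single radial bump, and then to read off hypothesis \eqref{21} as precisely the condition that makes this family cross a critical level. Fix a nonzero radial $w\in C_c^\infty(\mathbb{R}^N)$ and, for $R>0$, set $w_R(x)=w(x/R)\in H_r^1(\mathbb{R}^N)\setminus\{0\}$. Writing $A_R=\int_{\mathbb{R}^N}|x|^{-s_1}|w_R|^p\,dx$ and $B_R=\int_{\mathbb{R}^N}|x|^{-s_2}|w_R|^q\,dx$, a change of variables gives the exact scalings $\|w_R\|^2=R^{N-2}\|\nabla w\|_2^2+R^N\|w\|_2^2$, $A_R=R^{N-s_1}A_1$, $B_R=R^{N-s_2}B_1$, and $A_1,B_1\in(0,\infty)$ because $s_1,s_2<2\le N$ makes $|x|^{-s_i}$ locally integrable.

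For fixed $R$ consider, exactly as in \cref{prop:1}, the function $g_R(t)=t^{-2}\varphi(t w_R)=\|w_R\|^2+\lambda A_R t^{p-2}-B_R t^{q-2}$ on $(0,\infty)$. Since $p>q>2$, $g_R(t)\to\|w_R\|^2>0$ as $t\to 0^+$, $g_R(t)\to+\infty$ as $t\to\infty$, and $g_R'$ vanishes only at $t_*(R)=\bigl(\tfrac{(q-2)B_R}{(p-2)\lambda A_R}\bigr)^{1/(p-q)}$, which is its global minimizer; an elementary computation gives
\begin{equation*}
F(R):=\min_{t>0}g_R(t)=\|w_R\|^2-K\,\frac{B_R^{(p-2)/(p-q)}}{A_R^{(q-2)/(p-q)}},\qquad K=\frac{p-q}{p-2}\Bigl(\frac{q-2}{\lambda(p-2)}\Bigr)^{(q-2)/(p-q)}>0 .
\end{equation*}
I will also use the algebraic identity $\psi(t w_R)=t^2 g_R(t)+t^3 g_R'(t)$, which follows by expanding both sides; consequently, whenever $g_R(t)=0$ one has $\operatorname{sign}\psi(tw_R)=\operatorname{sign} g_R'(t)$, so a zero $t$ of $g_R$ produces a point of $M^+$ when $g_R'(t)<0$ and a point of $M^0$ when $g_R'(t)=0$. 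The task is thus to control the sign of $F(R)$.

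Substituting the scalings, $F(R)=R^{N-2}\|\nabla w\|_2^2+R^N\|w\|_2^2-KC_w R^{\alpha}$, where $C_w=B_1^{(p-2)/(p-q)}A_1^{-(q-2)/(p-q)}>0$ and $\alpha=\frac{(N-s_2)(p-2)-(N-s_1)(q-2)}{p-q}$. A direct computation shows that $\alpha<N-2$ is equivalent to $q(2-s_1)>(2-s_2)p+2(s_2-s_1)$, i.e.\ to hypothesis \eqref{21}; hence under \eqref{21} the term $R^{\alpha}$ dominates as $R\to0^+$ and $F(R)<0$ for all sufficiently small $R$. On the other hand $\alpha<N$ holds unconditionally — it is equivalent to $s_2(p-2)>s_1(q-2)$, which is automatic since $s_1<s_2$ and $2<q<p$ — so $R^N\|w\|_2^2$ dominates and $F(R)\to+\infty$ as $R\to\infty$. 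Since $F$ is continuous on $(0,\infty)$ (it is $\|w_R\|^2$ minus an explicit power of $R$), the intermediate value theorem furnishes $R_0$ with $F(R_0)=0$; then $t_*(R_0)$ is the unique zero of $g_{R_0}$ and $g_{R_0}'(t_*(R_0))=0$, so $t_*(R_0)w_{R_0}\in M^0$ and $M^0\neq\phi$. For $M^+$, choose $R$ small with $F(R)<0$: then $\varphi(t_*(R)w_R)=t_*(R)^2 g_R(t_*(R))<0$, and \cref{prop:1} yields $M^+\neq\phi$ (equivalently, the smaller zero of $g_R$ already lies in $M^+$).

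The genuinely load‑bearing step is the bookkeeping that identifies the two exponent thresholds: that $\alpha<N-2$ reproduces \eqref{21} verbatim, and that $\alpha<N$ is always true. These are exactly what force the curve $R\mapsto F(R)$ to pass from negative values through the level $0$ to $+\infty$; everything else — the explicit minimization giving $F(R)$, continuity of $F$, attainment of the minimum at an interior point so that $g_R'$ vanishes there, and the sign bookkeeping via $\psi(tw_R)=t^2g_R(t)+t^3g_R'(t)$ together with \cref{prop:1} — is routine.
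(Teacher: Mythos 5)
Your proof is correct and takes essentially the same route as the paper: the same dilation family $w(\cdot/R)$ with the same fiber minimization in $t$ (your $F(R)$ is, up to the positive factor $R^{N-2}$, exactly the paper's function $g(r)$, and \eqref{21} enters identically as the exponent comparison producing the sign change, hence a zero giving an element of $M^0$). The only minor variation is that you get $M^+\neq\phi$ directly from small $R$ where $F(R)<0$ together with \cref{prop:1}, while the paper perturbs the dilation parameter of the $M^0$ element it has just constructed; both reductions to \cref{prop:1} are valid.
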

\begin{proof}
	Given \(u\in H_r^1(\R)\setminus \{0\}\), for \(r > 0\) and \(x\in \R\), we define $$u_{r}(x)=u(r^{-1}x).$$ It's easy to know $u_r\in H^1_r(\R  )$.
	Define
	\begin{eqnarray*}
		g(r)&\stackrel{\triangle}{=}&\int_{\R }\left| \nabla u\right|^2 dx+r^{2}\int_{\R } \left| u\right| ^2dx\\&&-\frac{p-q}{p-2}\int_{\R }|x|^{-s_{2}}|u|^q d x\left(\frac{(q-2)\int_{\R }|x|^{-s_{2}}|u|^q d x}{\lambda(p-2)\int_{\R }|x|^{-s_{1}}|u|^p d x}\right)^\frac{q-2}{p-q}r^{2-s_2-(q-2)\frac{s_2 - s_1}{p - q}}.
	\end{eqnarray*}
	By \eqref{21}, we have $$2-s_2-(q-2)\frac{s_2 - s_1}{p - q}<0.$$ Hence, $g$ is strictly increasing on (0,+$\infty$). Moreover,
	\begin{eqnarray*}
		g(+0)&=&-\infty, \\
		g(+\infty)&=&+\infty,
	\end{eqnarray*} Thus, there exists unique $r_0\in $ $(0,+\infty$) such that $g(r_0)=0$. Let
	\begin{eqnarray*}
		t_0=\left(\frac{(q-2)\int_{\R }|x|^{-s_{2}}|u|^q d x}{\lambda(p-2)\int_{\R }|x|^{-s_{1}}|u|^p d x}\right)^\frac{1}{p-q}r_0^{-\frac{s_2 - s_1}{p - q}}.
	\end{eqnarray*}
	Then we have
	\begin{eqnarray*}
		\varphi(t_0u_{r_0})&=&t_0^2r_0^{N-2}\int_{\R }\left| \nabla u\right|^2 dx+t_0^2r_0^{N}\int_{\R } \left| u\right| ^2dx\\&&+ t_0^pr_0^{N-s_1}\lambda\int_{\R }|x|^{-s_{1}}|u|^p d x-t_0^qr_0^{N-s_2}\int_{\R }|x|^{-s_{2}}|u|^q d x\\&=&t_0^2r_0^{N-2}g(r_0)\\&=&0,
	\end{eqnarray*}
	and
	\begin{eqnarray*}
		\psi(t_0u_{r_0})&=&t_0^2r_0^{N-2}\int_{\R }\left| \nabla u\right|^2 dx+t_0^2r_0^{N}\int_{\R } \left| u\right| ^2dx\\&&+ (p-1)t_0^pr_0^{N-s_1}\lambda\int_{\R }|x|^{-s_{1}}|u|^p d x \\&&-(q-1)t_0^qr_0^{N-s_2}\int_{\R }|x|^{-s_{2}}|u|^q d x-\varphi(t_0u_{r_0})\\
		&=& (p-2)t_0^pr_0^{N-s_1}\lambda\int_{\R }|x|^{-s_{1}}|u|^p d x-(q-2)t_0^qr_0^{N-s_2}\int_{\R }|x|^{-s_{2}}|u|^q d x\\&=&0,
	\end{eqnarray*}
	which implies that $t_0u_{r_0}\in M^0$. Thus, $M^0 \not= \phi$.

	Assume that $u\in M^0$. For $r>0$, let
	\begin{eqnarray*}
		h(r)&=&\int_{\R }\left| \nabla u\right|^2 dx+r^2\int_{\R } \left| u\right| ^2dx\\&&+ r^{2-s_1}\lambda\int_{\R }|x|^{-s_{1}}|u|^p d x-r^{2-s_2}\int_{\R }|x|^{-s_{2}}|u|^q d x.
	\end{eqnarray*}
	Then $h(1)=0$ and
	\begin{eqnarray*}
		h'(1)&=&2\int_{\R } \left| u\right| ^2dx+ (2-s_1)\lambda\int_{\R }|x|^{-s_{1}}|u|^p d x-(2-s_2)\int_{\R }|x|^{-s_{2}}|u|^q d x\\&=&2\int_{\R } \left| u\right| ^2dx+ (2-s_1)\frac{q-2}{p-q}\|u\|^2-(2-s_2)\frac{p-2}{p-q}\|u\|^2\\&>&0.
	\end{eqnarray*} Hence, there exists $\delta>0$ such that	$h(r)<0$ for $r\in (1-\delta, 1)$. Thus $$\varphi(u_r)=r^{N-2}h(r)<0\quad \text{for} \ r\in (1-\delta, 1).$$ By \cref{prop:1}, we have $M^+ \not= \phi$.
\end{proof}

\begin{prop}\label{m^+}
	Suppose that $0\leq s_1 < s_2 < 2$, $2<q<p< 2^{*}(s_1)$ and $q< 2^{*}(s_2)$. Assume that \eqref{21} holds. Then  $0<m^0<+\infty$, and $0<m^+<+\infty$.
\end{prop}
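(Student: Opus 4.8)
The plan is to turn the two pointwise constraints defining $M^0$ and $M^+$ into an upper bound for the ``bad'' term $\lambda\int_{\R}|x|^{-s_1}|u|^p\,dx$ in terms of $\|u\|^2$, which reduces $I$ on these sets to a positive multiple of $\|u\|^2$, and then to bound $\|u\|$ uniformly from below on the whole Nehari manifold $M$. The finiteness from above is immediate: by \cref{prop:2} both $M^0$ and $M^+$ are non-empty, and $I$ is finite at every point of $H^1(\R)$, so evaluating $I$ at any fixed admissible function gives $m^0<+\infty$ and $m^+<+\infty$.

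Next I would establish a uniform lower bound $\|u\|\geq\rho_0>0$ valid for every $u\in M$. Indeed, $\varphi(u)=0$ reads $\|u\|^2+\lambda\int_{\R}|x|^{-s_1}|u|^p\,dx=\int_{\R}|x|^{-s_2}|u|^q\,dx$; discarding the non-negative term $\lambda\int_{\R}|x|^{-s_1}|u|^p\,dx$ and using the continuous embedding $H^1(\R)\hookrightarrow L^q(\R,|x|^{-s_2}\,dx)$ (legitimate since $2<q\leq 2^*(s_2)$ and $0\leq s_2<2$), one gets $\|u\|^2\leq\int_{\R}|x|^{-s_2}|u|^q\,dx\leq C\|u\|^q$, hence $\|u\|\geq\rho_0:=C^{-1/(q-2)}>0$.

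For the positivity of the infima, I would first use $\varphi(u)=0$ to eliminate $\int_{\R}|x|^{-s_2}|u|^q\,dx$ from the energy, obtaining
\begin{equation*}
I(u)=\left(\frac{1}{2}-\frac{1}{q}\right)\|u\|^2+\lambda\left(\frac{1}{p}-\frac{1}{q}\right)\int_{\R}|x|^{-s_1}|u|^p\,dx,
\end{equation*}
whose second term is negative because $p>q$. Then, from $\psi(u)-\varphi(u)=(p-2)\lambda\int_{\R}|x|^{-s_1}|u|^p\,dx-(q-2)\int_{\R}|x|^{-s_2}|u|^q\,dx$ together with $\varphi(u)=0$, I would deduce for $u\in M^+$ (where $\psi(u)<0$) the strict inequality $\lambda\int_{\R}|x|^{-s_1}|u|^p\,dx<\frac{q-2}{p-q}\|u\|^2$, and for $u\in M^0$ (where $\psi(u)=0$) the corresponding equality. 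Substituting back yields $I(u)\geq\frac{(p-2)(q-2)}{2pq}\|u\|^2$ in both cases (strictly on $M^+$), and combining with the lower bound of the previous step gives $m^0,\,m^+\geq\frac{(p-2)(q-2)}{2pq}\rho_0^2>0$.

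I do not expect a real difficulty in this argument. The only points calling for care are invoking the correct weighted embedding for $\int_{\R}|x|^{-s_2}|u|^q\,dx$ when $q$ may be strictly below $2^*(s_2)$, and tracking the chain of (in)equalities so that the strict sign survives on $M^+$; note that assumption \eqref{21} plays no role here beyond guaranteeing, via \cref{prop:2}, that $M^0,M^+\neq\phi$.
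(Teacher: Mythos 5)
Your proposal is correct and follows essentially the same route as the paper: use \cref{prop:2} for nonemptiness (hence finiteness of the infima), derive a uniform lower bound $\|u\|\geq C^{-1/(q-2)}$ on all of $M$ from $\varphi(u)=0$ and the weighted Hardy--Sobolev embedding, and reduce $I$ via the constraints to (at least) $\frac{(p-2)(q-2)}{2pq}\|u\|^2$ on $M^0$ and $M^+$. The only difference is that you spell out the $M^+$ case (the strict bound $\lambda\int_{\R}|x|^{-s_1}|u|^p\,dx<\frac{q-2}{p-q}\|u\|^2$ coming from $\psi(u)<0$), which the paper dispatches with ``Similarly''.
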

\begin{proof}
	From \cref{prop:2}, we know that $M_0 \neq \emptyset$, and furthermore, one has $m^0 < +\infty$. For any $u \in M^0$, we have $\varphi(u) = 0$ and $\psi(u) = 0$,
	which implies that
	\begin{eqnarray*}
		\lambda\int_{\R }|x|^{-s_{1}}|u|^p d x=\frac{q-2}{p-q}\|u\|^2,\\
		\int_{\R }|x|^{-s_{2}}|u|^q d x=\frac{p-2}{p-q}\|u\|^2.
	\end{eqnarray*}
	It follows from $\varphi(u)=0$ and Hardy-Sobolev inequality that
	\begin{eqnarray*}
		\|u\|^2&\leq&\|u\|^2+\lambda\int_{\R }|x|^{-s_{1}}|u|^p d x\\
		&=&\int_{\R }|x|^{-s_{2}}|u|^q d x	\\
		&\leq& C\|u\|^q
	\end{eqnarray*}
	for all $u\in M$ and some constant $C>0$, which implies that $\|u\|\geq C^{-\frac{1}{q-2}}$. Hence, one has
	\begin{eqnarray*}
		I(u)&=&\frac{(p-2)(q-2)}{2pq}\|u\|^2\\
		&\geq& \frac{(p-2)(q-2)}{2pq}C^{-\frac{2}{q-2}}\\
		&>&0
	\end{eqnarray*}
	for all $u \in M^0$. Then $m^0>0.$ Similarly, we have $0<m^+<+\infty$.
\end{proof}

\begin{prop}\label{m^+<m^0}
Suppose that $0\leq s_1 < s_2 < 2$, $2<q<p<2^{*}(s_1)$ and $q< 2^{*}(s_2)$. Assume that \eqref{21} holds and there exists an $u_0\in M^0$ such that $I(u_0)=m^0$. Then we have $m^+<m^0$.
\end{prop}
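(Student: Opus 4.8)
\noindent\emph{Proof sketch.} Let $u_0\in M^0$ satisfy $I(u_0)=m^0$, and for $r>0$ set $(u_0)_r(x)=u_0(x/r)\in H_r^1(\mathbb R^N)$. The plan is to run the dilation device from the proof of \cref{prop:2}. Exactly as there, one has $\varphi((u_0)_r)=r^{N-2}h(r)$ with $h(1)=\varphi(u_0)=0$ and $h'(1)>0$ — this last inequality being where \eqref{21} enters, through $2-s_2-(q-2)\tfrac{s_2-s_1}{p-q}<0$ — so that $\varphi((u_0)_r)<0$ for $r$ in some left neighbourhood $(1-\delta,1)$ of $1$. For each such $r$, \cref{prop:1} then produces $t_0(r)\in(0,1)$ with $w_r:=t_0(r)(u_0)_r\in M^+$, hence $m^+\le I(w_r)$. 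It therefore suffices to show $I(w_r)<m^0$ for $r$ close enough to $1$.

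I would first check that $w_r\to u_0$ in $H^1(\mathbb R^N)$ as $r\to1^-$, so that $I(w_r)\to I(u_0)=m^0$. The key point is that $t_0(r)\to1$: $t_0(r)$ is the smaller root of $g_r(t):=t^{-2}\varphi(t(u_0)_r)$, and $g_r\to g_1$ uniformly on compacta, where $g_1(t)=\|u_0\|^2+\lambda t^{p-2}\!\int_{\mathbb R^N}|x|^{-s_1}|u_0|^p\,dx-t^{q-2}\!\int_{\mathbb R^N}|x|^{-s_2}|u_0|^q\,dx$; since $u_0\in M^0$ one has $g_1(1)=\varphi(u_0)=0$ and (using $\varphi(u_0)=0$ again) $g_1'(1)=\psi(u_0)=0$, so $g_1$ is nonnegative near $t=1$ with a double zero there, which pins both roots of $g_r$ at $1$ as $r\to1^-$.

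The crucial ingredient is a monotonicity estimate. Writing $\Theta(t,r):=I(t(u_0)_r)$ one has $\varphi(t(u_0)_r)=t\,\partial_t\Theta(t,r)$, so $\partial_t\Theta(t_0(r),r)=0$ because $w_r\in M$; consequently
\[
\frac{d}{dr}\,I(w_r)=\partial_t\Theta(t_0(r),r)\,t_0'(r)+\partial_r\Theta(t_0(r),r)=\partial_r\Theta(t_0(r),r)\ \xrightarrow[\ r\to1^-\ ]{}\ \partial_r\Theta(1,1)=\frac{d}{dr}\,I((u_0)_r)\Big|_{r=1}=:D .
\]
If $D>0$, then $r\mapsto I(w_r)$ is strictly increasing on a left neighbourhood of $1$; being continuous there with limit $m^0$ at $r=1$, it satisfies $I(w_r)<m^0$ for all such $r<1$, and then $m^+\le I(w_r)<m^0$, as required.

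Everything is thus reduced to proving $D>0$. Differentiating $r\mapsto I((u_0)_r)$ at $r=1$ — elementary, since $I((u_0)_r)$ is an explicit combination of $r^{N-2}$, $r^{N}$, $r^{N-s_1}$, $r^{N-s_2}$ — and then inserting the $M^0$–identities $\lambda\!\int_{\mathbb R^N}|x|^{-s_1}|u_0|^p\,dx=\tfrac{q-2}{p-q}\|u_0\|^2$ and $\int_{\mathbb R^N}|x|^{-s_2}|u_0|^q\,dx=\tfrac{p-2}{p-q}\|u_0\|^2$, one obtains
\[
D=\frac{N-2}{2}\|u_0\|^2+\int_{\mathbb R^N}u_0^2\,dx+\frac{\|u_0\|^2}{p-q}\left(\frac{(N-s_1)(q-2)}{p}-\frac{(N-s_2)(p-2)}{q}\right).
\]
The remaining task is to verify that this quantity is strictly positive, using \eqref{21}, the subcriticality $p<2^{*}(s_1)$ and $q<2^{*}(s_2)$, and $\int_{\mathbb R^N}u_0^2\,dx>0$. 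I expect this last estimate to be the main obstacle: it is precisely here that hypothesis \eqref{21} and the subcriticality of both exponents become indispensable, for if $D$ were not positive then $u_0$ could be a local minimizer of $I$ on $M$ and the dilation device above would furnish no element of $M^+$ below the level $m^0$.
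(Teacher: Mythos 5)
The reduction of everything to the single inequality $D>0$ is exactly where your argument has a genuine gap: that inequality is not proved, and it does not follow from \eqref{21} together with the subcriticality of $p$ and $q$. Inserting the $M^0$-identities, your own formula reads
\[
D=\Bigl(\tfrac{N-2}{2}+\tfrac{1}{p-q}\bigl(\tfrac{(N-s_1)(q-2)}{p}-\tfrac{(N-s_2)(p-2)}{q}\bigr)\Bigr)\|u_0\|^2+\int_{\mathbb{R}^N}u_0^2\,dx,
\]
and, writing $N-s_i=\tfrac{N-2}{2}\,2^{*}(s_i)$, the coefficient of $\|u_0\|^2$ is positive exactly when $q\,(2^{*}(s_1)-p)(q-2)>p\,(2^{*}(s_2)-q)(p-2)$, whereas \eqref{21} is equivalent to the strictly weaker inequality $(2^{*}(s_1)-p)(q-2)>(2^{*}(s_2)-q)(p-2)$ (recall $q<p$ and both factors on the right are positive). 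Hence there is a whole range of admissible $(p,q,s_1,s_2)$ satisfying \eqref{21} for which this coefficient is negative, and then the sign of $D$ is governed by the ratio $\int u_0^2\,dx/\|u_0\|^2$, over which the hypotheses give no control; at the doubly critical values $p=2^{*}(s_1)$, $q=2^{*}(s_2)$ the coefficient vanishes identically (this is precisely the Pohozaev structure), so no soft argument from subcriticality can close this. The point is that \eqref{21} makes the dilation derivative of $\varphi$ positive — which is why the device works in \cref{prop:2} — but it does not make the dilation derivative of $I$ positive. The rest of your scheme ($t_0(r)\to1$, $w_r\to u_0$, $\tfrac{d}{dr}I(w_r)\to D$, and the use of \cref{prop:1}) is sound, but without $D>0$ you produce no point of $M^+$ strictly below the level $m^0$, and the statement is not proved.

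For comparison, the paper does not perturb by dilations at this step. It first shows $\varphi'(u_0)\neq0$: otherwise $u_0$ would solve the equation $-2\Delta u+2u=-\lambda p|x|^{-s_1}|u|^{p-2}u+q|x|^{-s_2}|u|^{q-2}u$, whose Pohozaev identity, combined with $\varphi(u_0)=\psi(u_0)=0$ and \eqref{21}, forces $u_0=0$, a contradiction. It then applies the implicit function theorem along a direction $v_0$ with $\langle\varphi'(u_0),v_0\rangle>0$ to build a curve $t\mapsto t(u_0+r(t)v_0)$ in $M$ through $u_0$, and the relevant derivatives along this curve are computed purely from the $M^0$-identities: the $t$-derivative of $\psi$ at $t=1$ equals $(p-2)(q-2)\|u_0\|^2>0$ and the second $t$-derivative of $I$ at $t=1$ equals $-(p-2)(q-2)\|u_0\|^2<0$, so points with $t<1$ close to $1$ lie in $M^+$ with energy strictly below $m^0$. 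To salvage your route you would either have to prove $D>0$ for the minimizer (which needs information beyond \eqref{21}) or switch to a non-dilation perturbation of this kind.
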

\begin{proof}
It follows from $u_0\in M^0$ that $\varphi'(u_0)\not=0$. If not, $\varphi'(u_0)=0$, $u_0$ is a solution of the following equation
\begin{equation*}
	-2\Delta u+2u=-\lambda p |x|^{-s_{1}}|u|^{p-2}u+q|x|^{-s_{2}}|u|^{q-2}u \text {\ \ in } \R.
\end{equation*}
Hence, $u_0$ satisfies the $Pohozave$ identity
\begin{eqnarray*}
	0&=&(N-2)\int_{\R }\left| \nabla u_0\right|^2 dx+N\int_{\R } \left| u_0\right| ^2dx\\&&+\lambda (N-s_1)\int_{\R }|x|^{-s_{1}}|u_0|^p d x-(N-s_2)\int_{\R }|x|^{-s_{2}}|u_0|^q d x.
\end{eqnarray*}
and which implies that
\begin{eqnarray}\label{232}
	0&=&2\int_{\R } \left| u_0\right| ^2dx+\lambda(2-s_1)\int_{\R }|x|^{-s_{1}}|u_0|^p d x \notag\\
	&&- (2-s_2)\int_{\R }|x|^{-s_{2}}|u_0|^q d x
\end{eqnarray}
by $\varphi(u_0)=0$. Moreover, from $\varphi(u_0)=0$ and $\psi(u_0)=0$, we obtain
\begin{eqnarray}\label{233}
	(p-2)\lambda\int_{\R }|x|^{-s_{1}}|u_0|^p d x-(q-2)\int_{\R }|x|^{-s_{2}}|u_0|^q d x=0.
\end{eqnarray}
By (\ref{232}) and (\ref{233}), we have
\begin{eqnarray*}
	2(q-2)\int_{\R } \left| u_0\right| ^2dx+\lambda[(2-s_1)(q-2)-(2-s_2)(p-2)]\int_{\R }|x|^{-s_{1}}|u_0|^p d x=0
\end{eqnarray*}
which implies that $u_0=0$, due to \eqref{21}. But $u_0\not=0$ because of $u_0\in M^0$. Hence, $\varphi'(u_0)\not=0$.

For $t>0$ and $r>0$, let
\begin{eqnarray*}
	h(t,r)&=&\varphi (t(u_0+rv_0)),
\end{eqnarray*}
where $v_0\in H^1_r(\R)$ satisfying $$\langle\varphi'(u_0),v_0\rangle>0$$ by  $\varphi'(u_0)\not=0$.
Then one has
\begin{eqnarray*}
	h_t(t,r)&=&\frac{\partial h}{\partial t}(t,r)\\&=&\langle \varphi  '(t(u_0+rv_0)),u_0+rv_0\rangle,\\
	h_{tt}(t,r)&=&\langle \varphi  ''(t(u_0+rv_0))(u_0+rv_0),u_0+rv_0\rangle
\end{eqnarray*}
and
\begin{eqnarray*}
	h_r(t,r)&=&\langle \varphi  '(t(u_0+rv_0)),tv_0\rangle.
\end{eqnarray*}
Then we have
\begin{eqnarray*}
	h_t(1,0)&=&\langle \varphi  '(u_0),u_0\rangle=0,\\
	h_{tt}(1,0)&=&\langle \varphi  ''(u_0)u_0,u_0\rangle>0
\end{eqnarray*}
and
\begin{eqnarray*}
	h_r(1,0)&=&\langle \varphi  '(u_0),v_0\rangle>0.
\end{eqnarray*}
Due to
\begin{eqnarray*}
	h(1,0)&=&0,\\
	h_r(1,0)&=&\langle \varphi '(u_0),v_0\rangle>0,
\end{eqnarray*}
and $$h\in C^2(\RR^+\times \RR).$$
It follows from the implicit function theorem that there exists a constant $\delta>0$ and a $C^2$ function $r=r(t)$ from $(1-\delta, 1+\delta)$ to $(-\delta, \delta)$ such that $r(1)=0$ and $$h(t,r(t))=0.$$ Hence, we have $$t(u_0+r(t)v_0)\in M \quad \text{for}\ t\in (1-\delta, 1+\delta).$$ Furthermore, one has
\begin{eqnarray*}
	r'(1)&=&-\frac{h_t}{h_r}=0,
\end{eqnarray*}
and
\begin{eqnarray*}
	r''(1)&=&\frac{2h_th_rh_{tr}-h_r^2h_{tt}-h_t^2h_{rr}}{h_r^3}\\
	&=&\frac{-h_{tt}}{h_r}\\
	&=&\frac{-\langle \varphi ''(u_0)u_0,u_0\rangle}{\langle \varphi '(u_0),v_0\rangle}\\
	&<&0.
\end{eqnarray*}
Define
$$
f(t)=\psi(t(u_0+r(t)v_0)) \quad \text{for}\ t>0.
$$
Then one has
\begin{eqnarray*}
	f'(t)&=&\langle\psi'(t(u_0+r(t)v_0)),u_0+r(t)v_0+tr'(t)v_0\rangle.
\end{eqnarray*}
Hence, we have $f(1)=0$ and
\begin{eqnarray*}
	f'(1)&=&\langle\psi'(u_0),u_0\rangle\\
	&=&2\|u_0\|^2+p(p-1)\lambda\int_{\R }|x|^{-s_{1}}|u_0|^pdx-q(q-1)\int_{\R }|x|^{-s_{2}}|u_0|^q d x\\
	&=&2\|u_0\|^2+p(p-1)\frac{q-2}{p-q}\|u_0\|^2-q(q-1)\frac{p-2}{p-q}\|u_0\|^2\\
	&=&(p-2)(q-2)\|u_0\|^2\\
	&>&0.
\end{eqnarray*}
There exists $\delta_1\in(0,\delta)$ such that $$t(u_0+r(t)v_0)\in M^+,$$ because that $$\psi(t(u_0+r(t)v_0))=f(t)<0$$ for $t\in (1-\delta_1, 1)$.

Moreover, define
$$
g(t)=I(t(u_0+r(t)v_0)) \quad \text{for}\ t>0.
$$
Then one has
\begin{eqnarray*}
	g'(t)&=&\langle I'(t(u_0+r(t)v_0)),u_0+r(t)v_0+tr'(t)v_0\rangle\\
	g''(t)&=&\langle I''(t(u_0+r(t)v_0))(u_0+r(t)v_0+tr'(t)v_0),u_0+r(t)v_0+tr'(t)v_0\rangle\\
	&&+\langle I'(t(u_0+r(t)v_0)),2r'(t)v_0+tr''(t)v_0\rangle.
\end{eqnarray*}
We have
\begin{eqnarray*}
	g'(1)&=&\langle I'(u_0),u_0\rangle\\
	&=&\varphi(u_0)\\
	&=&0,\\
	g''(1)&=&\langle I''(u_0)u_0,u_0\rangle+r''(1)\langle I'(u_0),v_0\rangle\\&=&\psi(u_0)-\langle \varphi''(u_0)u_0,u_0\rangle\\
	&=&-\langle \varphi''(u_0)u_0,u_0\rangle\\
	&=&-2\|u_0\|^2-p(p-1)\lambda\int_{\R }|x|^{-s_{1}}|u_0|^{p} dx+q(q-1)\int_{\R }|x|^{-s_{2}}|u_0|^q d x\\
	&=&-2\|u_0\|^2-p(p-1)\frac{q-2}{p-q}\|u_0\|^2+q(q-1)\frac{p-2}{p-q}\|u_0\|^2\\
	&=&-(p-2)(q-2)\|u_0\|^2\\
	&<&0.
\end{eqnarray*}
There exists $\delta_2\in(0,\delta_1)$ such that $$m^+\leq I(t(u_0+r(t)v_0))=g(t)<g(1)=I(u_0)=m^0$$ for $t\in (1-\delta_2, 1).$ Then we complete our proof.
\end{proof}

\section{Proof of Theorems \ref{thm1} and \ref{thm2} }
We now proceed to prove the main Theorems.

\begin{proof}[Proof of Theorem \ref{thm1}]
We assume that $u_0$ is a nonzero solution of \cref{eq1}, then the $Pohozave$ identity is
\begin{eqnarray*}
	0&=&\frac{N-2}{2}\int_{\R }\left| \nabla u_0\right|^2 dx+\frac{N}{2}\int_{\R } \left| u_0\right| ^2dx\\&&+\frac{\lambda (N-s_1)}{p}\int_{\R }|x|^{-s_{1}}|u_0|^p d x-\frac{N-2}{2}\|u_0\|^{2^*(s_2)}_{2^*(s_2),s_2},
\end{eqnarray*}
and the $Nehari$ identity is
\begin{eqnarray*}
	\int_{\R }\left| \nabla u_0\right|^2 dx+\int_{\R } \left| u_0\right| ^2dx+\lambda\int_{\R }|x|^{-s_{1}}|u_0|^p d x-\|u_0\|^{2^*(s_2)}_{2^*(s_2),s_2}=0.
\end{eqnarray*}
Combining the above two equations, we can obtain
\begin{eqnarray*}
	\int_{\R } \left| u_0\right| ^2dx+\left( \frac{N-s_1}{p}-\frac{N-2}{2}\right) \lambda\int_{\R }|x|^{-s_{1}}|u_0|^p d x=0.
\end{eqnarray*}
Note that $$\frac{N-s_1}{p}-\frac{N-2}{2}\geq 0$$ from $p\leq 2^*(s_1)$. Then $u_0\equiv 0$, which contradicts our previous assumption that $u_0$ is a nonzero solution. Therefore, it is proven that \cref{eq1} has no nonzero solution.
\end{proof}

\begin{proof}[Proof of Theorem \ref{thm2}]
From \cref{prop:2,m^+}, we obtain $M^+\not=\phi$, and $0<m^+<+\infty$. It follows from Ekeland's variational principle \cite{Ekeland1974} that there exists a sequence $\{u_n\} \subset M^+$ with $u_n \geq 0$ such that
\begin{eqnarray*}
	I(u_n)\to m^+
\end{eqnarray*}
and
\begin{eqnarray}\label{IM0}
	(I|_M)'(u_n)\to 0
\end{eqnarray}
as $n\to\infty$.
By $u_n\in M^+$, one has
\begin{eqnarray*}
	\varphi(u_n)&=&\|u_n\|^2+\lambda\int_{\R }|x|^{-s_{1}}|u_n|^p d x-\int_{\R }|x|^{-s_{2}}|u_n|^q d x=0,\\
	\psi(u_n)&=&\|u_n\|^2+(p-1)\lambda\int_{\R }|x|^{-s_{1}}|u_n|^p d x-(q-1)\int_{\R }|x|^{-s_{2}}|u_n|^q d x<0,
\end{eqnarray*}
which implies that
\begin{eqnarray*}
	\lambda\int_{\R }|x|^{-s_{1}}|u_n|^p d x&=&\frac{q-2}{p-q}\|u_n\|^2+\frac{\psi(u_n)}{p-q},\\
	\int_{\R }|x|^{-s_{2}}|u_n|^q d x&=&\frac{p-2}{p-q}\|u_n\|^2+\frac{\psi(u_n)}{p-q}.
\end{eqnarray*}
The one obtains
\begin{eqnarray*}
	I(u_n)&=&\frac{1}{2}\|u_n\|^2+\frac{1}{p}\lambda\int_{\R }|x|^{-s_{1}}|u_n|^p d x-\frac{1}{q}\int_{\R }|x|^{-s_{2}}|u_n|^q d x\\
	&=&\frac{(p-2)(q-2)}{2pq}\|u_n\|^2-\frac{\psi(u_n)}{pq}\\
	&\geq&\frac{(p-2)(q-2)}{2pq}\|u_n\|^2.
\end{eqnarray*}
It follows that $\{u_n\}$ is bounded. Going if necessary to a subsequence, we can assume that
\begin{eqnarray*}
	&&u_n\rightharpoonup u_0 \ \ \ \ \  \ \ \ \ \ \  \ \ \  \ \text{in}\ H^1_r{(\R)},\\
	&&u_n\rightarrow u_0 \ \ \ \ \ \ \ \ \  \ \ \ \ \ \ \text{in}\ L^{q}( \R)\ \ (q\in (2,2^*)),\\
	&&u_n(x)\rightarrow u_0(x) \ \ \ \ \  \ \ a.e. \ \text{in}\  \R,\\
	&&\int_{\R }|x|^{-s_{1}}|u_n|^p d x\rightarrow\int_{\R }|x|^{-s_{1}}|u_0|^p d x,\\
	&&\int_{\R }|x|^{-s_{2}}|u_n|^q d x\rightarrow\int_{\R }|x|^{-s_{2}}|u_0|^q d x,\\
	&&\psi(u_n)\to -A
\end{eqnarray*}
as $n\rightarrow\infty$ for some $u_0\in \ H^1_r{(\R)}$ and some $A\geq 0$. Moreover, we have $u_0\geq 0$ and  $\lim\limits_{n\to\infty}\|u_n\|\geq\|u_0\|$.

Now we want to prove that $u_n \to u_0$ in $H^1(\R)$. It follows from Proposition 5.12 in \cite{Willem1996} that
\begin{align*}
	\left\|(\left.I\right|_M )^{\prime}\left(u_n\right)\right\|=\min _{\mu \in R}\left\|I^{\prime}\left(u_n\right)-\mu \varphi^{\prime}\left(u_n\right)\right\| .
\end{align*}
Then there exist $\mu_n \in R$ such that
\begin{align*}
	0 \leq\left\|I^{\prime}\left(u_n\right)-\mu_n \varphi^{\prime}\left(u_n\right)\right\| \leq\left\|(\left.I\right|_M )^{\prime}\left(u_n\right)\right\|+\frac{1}{n}
\end{align*}
for all $n \in N$. Therefore, by (\ref{IM0}), we have
\begin{align*}
	\left\|I^{\prime}\left(u_n\right)-\mu_n \varphi^{\prime}\left(u_n\right)\right\| \rightarrow 0
\end{align*}
as $n \rightarrow \infty$, which implies that
\begin{eqnarray*}
	\left|\left\langle I^{\prime}\left(u_n\right)-\mu_n \varphi^{\prime}\left(u_n\right),u_n\right\rangle\right|
	&=&|\mu_n| \left|\left\langle \varphi^{\prime}\left(u_n\right),u_n\right\rangle\right| \\
	&=&|\mu_n| \left| \varphi (u_n)+\psi (u_n)\right| \\
	&=&|\mu_n| \left| \psi (u_n)\right|.
\end{eqnarray*}
In the case $\psi(u_n)\to -A<0$, we can obtain  $\mu_n \to 0$ and $\|I'(u_n)\| \to 0$. Then we can know
\begin{eqnarray*}
	|\langle I'(u_n)-I'(u_0),\ u_n-u_0\rangle| &\leq&\|I'(u_n)\|\|u_n-u_0\|+|\langle I'(u_0),\ u_n-u_0\rangle|\\
	&\leq&C_1\|I'(u_n)\|+|\langle I'(u_0),\ u_n-u_0\rangle|\\
	&\rightarrow&0
\end{eqnarray*}
and
\begin{eqnarray*}
	&&\left|\int_{\R}|x|^{-s_{2}}(|u_n|^{q-2}u_n-|u_0|^{q-2}u_0)(u_n-u_0) d x\right|\\
	&\leq&\int_{\R}|x|^{-s_{2}}(|u_n|^{q-1}+|u_0|^{q-1})|u_n-u_0| d x\\
	&\leq&\int_{\R}|x|^{-s_{2}}|u_n|^{q-1}|u_n-u_0| d x+\int_{\R}|x|^{-s_{2}}|u_0|^{q-1}|u_n-u_0| d x\\
	&\leq&\left(\int_{\R}|x|^{-s_{2}}|u_n|^{q}d x\right)^\frac{q-1}{q}\left(\int_{\R}|x|^{-s_{2}}|u_n-u_0|^{q}d x\right)^\frac{1}{q} \\&&+\left(\int_{\R}|x|^{-s_{2}}|u_0|^{q}d x\right)^\frac{q-1}{q}\left(\int_{\R}|x|^{-s_{2}}|u_n-u_0|^{q}d x\right)^\frac{1}{q}\\
	&\leq&C_2\left(\int_{\R}|x|^{-s_{2}}|u_n-u_0|^{q}d x\right)^\frac{1}{q}\\
	&\rightarrow&0,
\end{eqnarray*}
where $C_1$ and $C_2$ are some positive constants.

By the definition of $I'$ we have
\begin{eqnarray*}
	\langle I'(u_n)-I'(u_0),\ u_n-u_0\rangle &=&(u_n-u_0,u_n-u_0)\\&&+\lambda\int_{\R}|x|^{-s_{1}}(|u_n|^{p-2}u_n-|u_0|^{p-2}u_0)(u_n-u_0) dx
	\\&&- \int_{\R}|x|^{-s_{2}}(|u_n|^{q-2}u_n-|u_0|^{q-2}u_0)(u_n-u_0) d x\\
	&\geq&\|u_n-u_0\|^2\\&&- \int_{\R}|x|^{-s_{2}}(|u_n|^{q-2}u_n-|u_0|^{q-2}u_0)(u_n-u_0) d x,
\end{eqnarray*}
Then one has $u_n \to u_0$ in $H^1_r(\R)$.

In the other case $\psi(u_n)\to -A=0$, let $$l=\lim\limits_{n\to\infty}\|u_n\|.$$ Then we have
\begin{eqnarray*}
	l^2+\lambda \int_{\R }|x|^{-s_{1}}|u_0|^p d x-\int_{\R }|x|^{-s_{2}}|u_0|^q d x&=&0,\\
	l^2+(p-1)\lambda \int_{\R }|x|^{-s_{1}}|u_0|^p d x-(q-1)\int_{\R }|x|^{-s_{2}}|u_0|^q d x&=&0,
\end{eqnarray*}
which implies that
\begin{eqnarray*}
	\lambda \int_{\R }|x|^{-s_{1}}|u_0|^p d x&=&\frac{q-2}{p-q}l^2,\\
	\int_{\R }|x|^{-s_{2}}|u_0|^q d x&=&\frac{p-2}{p-q}l^2.
\end{eqnarray*}
Note that $u_n \to u_0$ in $H^1_r(\R)$ if  $l=\|u_0\|$ and $u_n \rightharpoonup u_0$ in $H^1_r(\R)$. Assume that $l>\|u_0\|$, we have
\begin{eqnarray*}
	\varphi(u_0)&=&\|u_0\|^2+\lambda \int_{\R }|x|^{-s_{1}}|u_0|^p d x-\int_{\R }|x|^{-s_{2}}|u_0|^q d x\\
	&<&0.
\end{eqnarray*}
Hence, there exists $t_0\in (0, 1)$ such that $t_0 u_0\in M^+$ by \cref{prop:1}. Moreover, we obtain
\begin{eqnarray*}
	I(t_0u_0)
	&=&\frac{1}{2}\|u_0\|^2t_0^2+\frac{1}{p}\lambda \int_{\R }|x|^{-s_{1}}|u_0|^p d xt_0^p-\frac{1}{q}\int_{\R }|x|^{-s_{2}}|u_0|^q d xt_0^q-\frac{1}{2}\varphi(t_0u_0)\\
	&=&-\frac{p-2}{2p}\lambda\int_{\R }|x|^{-s_{1}}|u_0|^p d x t_0^p+\frac{q-2}{2q}\int_{\R }|x|^{-s_{2}}|u_0|^q d xt_0^q\\
	&=&h(t_0)	,
\end{eqnarray*}
where $$h(t)=-\frac{p-2}{2p}\frac{q-2}{p-q}l^2t^p+\frac{q-2}{2q}\frac{p-2}{p-q}l^2t^q.$$  Furthermore,  one has
\begin{eqnarray*}
	m^+&=&\frac{1}{2}l^2+\frac{1}{p}\lambda \int_{\R }|x|^{-s_{1}}|u_0|^p d x-\frac{1}{q}\int_{\R }|x|^{-s_{2}}|u_0|^q d x\\&=&\frac{1}{2}l^2+\frac{q-2}{p(p-q)}l^2-\frac{p-2}{q(p-q)}l^2\\&=&h(1).
\end{eqnarray*}
Note that
\begin{eqnarray*}
	h'(t)&=&-\frac{p-2}{2}\frac{q-2}{p-q}l^2t^{p-1}+\frac{q-2}{2}\frac{p-2}{p-q}l^2t^{q-1}\\&>&0
\end{eqnarray*} for $t\in (0,1)$ and $h'(1)=0$. Then we obtain  $h(1)>h(t_0)$, that is, $$m^+>I(t_0u_0),$$ which contradicts the fact $$m^+\leq I(t_0u_0)$$ by $t_0u_0\in M^+$ and the definition of $m^+$. Then one has  $l=\|u_0\|$, and $$u_n \to u_0 \quad \text{in}\ H^1_r(\R).$$

From $u_n \to u_0$ in $H^1_r(\R)$, we obtain $I(u_0)=m^+$ and $u_0 \in M^+ \cup M^0$. Assume that $u_0 \in M^0$. Then one has $$m^0\leq I(u_0)=m^+\leq m^0,$$ which implies that $$I(u_0)=m^0=m^+.$$ From Proposition \ref{m^+<m^0}, $m^+<m^0$, which contradicts $m^0=m^+$. So one has $u_0 \in M^+$. Hence,
$$\nabla(I|_{M^+})(u_0)=0.$$ Thus, there exists $\mu \in \RR$ such that
\begin{eqnarray*}
	\nabla I(u_0)=\nabla(I|_{M^+})(u_0)+\mu\nabla \varphi(u_0)=\mu\nabla \varphi(u_0),
\end{eqnarray*} which implies that  \begin{eqnarray*}
	0=\varphi(u_0)=(\nabla I(u_0), u_0)=\mu(\nabla\varphi(u_0),u_0)=\mu \psi(u_0).
\end{eqnarray*} Then  $\mu=0$ by $\psi(u_0)<0$, so $\nabla I(u_0)=0$, $I'(u_0)=0$ in $(H^1_r(\R))'$. By the principle of symmetric criticality due to Palais \cite{Palais1979}, we know $I'(u_0)=0$ in $(H^1(\R))'$. Then $u_0$ is a solution of \cref{eq1}. Note that $u_0\geq 0$ and $u_0\neq 0$. By the strong maximum principle \cite{Gilbarg2001} $u_0$ is a positive solution of \cref{eq1}. Moreover, it follows from Lemma 2.6 in \cite{Tang2025} that $u_0\in C(\R)$, which completes the proof.
\end{proof}

\subsection*{Data Availability} Data sharing not applicable to this article as no datasets were generated or analysed during the current study.

\subsection*{Declarations}
\textbf{Conflict of interest} The authors have no Conflict of interest to declare that are relevant to the content of this article.

\bibliography{ref}

\end{document}